\newlength{\dhatheight}
\newtheorem{theorem}{Theorem}[section]
\newtheorem{conjecture}[theorem]{Conjecture}
\newtheorem{corollary}[theorem]{Corollary}
\newtheorem{lemma}[theorem]{Lemma}
\begin{document}
\title[Unknotting number and connected sums: $4_1$ and $5_1$]{Unknotting number and connected sums: The knots $4_1$ and $5_1$}

\author{Mark Brittenham and Susan Hermiller}
\address{Department of Mathematics\\
        University of Nebraska\\
         Lincoln NE 68588-0130, USA\\
        mbrittenham2@unl.edu}
\address{Department of Mathematics\\
        University of Nebraska\\
         Lincoln NE 68588-0130, USA\\
        hermiller@unl.edu}

\date{January 25, 2026}

\maketitle

\begin{abstract}
We show that the knots $K\in\{4_1,5_1\}$ can be paired with a corresponding
knot $K^\prime$ such that $u(K\#K^\prime)<u(K)+u(K^\prime)$.
As a consequence unknotting number fails to be additive for these knots.
We also provide a candidate knot $K^\prime$ for the knot $3_1$. 
\end{abstract}


\thanks{Mathematics Subject Classification 2020: 57K10, 57K31}


\section{Introduction}\label{sec:intro}


For knots $K$ in the 3-sphere $S^3$, the \emph{unknotting number} (or \emph{Gordian number}) 
$u(K)$ of $K$ is one of the most fundamental
measures of the complexity of the knot. It is defined as the 
minimum number of crossing changes, interspersed with isotopy,
required to transform a diagram of $K$ to a diagram of the unknot. 
Its study dates back to the very early years of knot theory;
John Tait referred to it as a knot's `beknottedness' in the 
first papers on the subject. In 1877, Tait \cite{tait77} stated `\emph{There must be some
simple method of determining the amount of beknottedness for any
given knot; but I have not hit upon it.}'. Nearly a 
century and a half later, the
unknotting number remains a challenging invariant to compute, even for some
knots with small crossing number. Many of these challenging examples occur
as connected sums.

Connected sum is a basic operation for creating or decomposing knots and links.
A diagram for the connected sum $K_1\#K_2$ of two knots $K_1$ and $K_2$ can be obtained from 
disjoint diagrams of the two knots by deleting a short arc in each and 
introducing two new arcs to connect the endpoints of the deleted arcs to
create a knot, without introducing additional crossings in the diagram. 
For a very long time it was an open problem whether or not the 
unknotting number was additive under connected sum; that is, whether or
not $u(K_1\#K_2)=u(K_1)+u(K_2)$ for every pair of knots. This is known as 
the Unknotting Additivity Conjecture \cite{kir93},\cite{wendt37}. The authors, 
however, showed that this was not true in \cite{bh25}, by showing that
the knot $K=7_1$ and its mirror image $\overline{K}$, each with unknotting number $3$, satisfy
$u(K\#\overline{K})\leq 5$. This in turn led to the discovery of 
infinitely many counterexamples, using the notion of Gordian adjacency.
A knot $K_1$ is \emph{Gordian adjacent} to $K_2$ if $K_1$ is contained in 
some minimal unknotting sequence for $K_2$.
Any knot that $7_1$ is Gordian adjacent to will also fail additivity, 
when paired with the knot $\overline{7_1}$ in a connected sum. 
The choice of mirroring, that is, whether
we use the knot $K$ or its mirror image $\overline{K}$ in a connected sum, 
must be taken into account in these constructions, since this can have a 
profound effect on the unknotting
number. The main result of \cite{bh25}, that $u(7_1\#\overline{7_1})\leq 5$,
stands in contrast with the fact that $u(7_1\#7_1)=6$.

Any counterexample to the Unknotting Additivity Conjecture built from a 
knot $K$ that $7_1$ is Gordian adjacent to, however, will have 
$u(K)\geq 3$ (and, except for $7_1$ itself, 
$u(K)\geq 4$), as will the corresponding connected summand 
$7_1$ or $\overline{7_1}$. This raises the natural question of whether or not any simpler
knots, with lower unknotting number, might also fail the Unknotting Additivity
Conjecture, meaning that they form part of a connnected sum that
fails additivity of unknotting number. In this paper we settle this 
question, by showing that the knots
$4_1$ (the figure-8 knot), with unknotting 
number one, and the knot $5_1$ (the $(2,5)$ torus knot), with unknotting
number 2, can be made part of a connected sum for which additivity fails. More precisely,

\begin{theorem}\label{thm:four-one}
The connected sum $K_4=4_1\#9_{10}$, of the knots $4_1$ and $9_{10}$,
satisfies $u(4_1)=1$, $u(9_{10})=3$ and $u(4_1\#9_{10})\leq 3$.
\end{theorem}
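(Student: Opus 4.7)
The plan is to combine the known unknotting numbers of the two summands with an explicit diagrammatic demonstration that $u(4_1 \# 9_{10}) \leq 3$. The equalities $u(4_1)=1$ and $u(9_{10})=3$ are classical and recorded in standard tables (e.g.\ KnotInfo); $u(4_1)=1$ follows since $4_1$ becomes unknotted by one crossing change in its standard diagram and is nontrivial, while $u(9_{10})=3$ is established via signature/Heegaard Floer obstructions combined with a realized three-move unknotting. These can be cited directly.

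The substantive content is the upper bound $u(4_1\#9_{10}) \leq 3$. The strategy, in the spirit of \cite{bh25}, is to produce a particular diagram $D$ of $4_1\#9_{10}$ — almost certainly not the naive connected-sum diagram, but one obtained after an isotopy that interleaves features of the two summands across the connecting band — and to specify three crossings $c_1,c_2,c_3$ in $D$ whose simultaneous change yields a diagram that reduces to the unknot. The conceptual point that makes this possible is that a crossing change in a sufficiently isotoped connected-sum diagram need not respect the decomposition into $4_1$ and $9_{10}$: a single crossing change, together with ambient isotopy, can make progress on both summands at once, breaking naive additivity.

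To verify that the resulting diagram is the unknot I would exhibit an explicit sequence of Reidemeister moves reducing it to the trivial diagram, and as a check compute a collection of invariants (Alexander polynomial, determinant, Jones polynomial, signature) using standard knot-theoretic software. The paper's references to \GAP{} and \KBMAG{} suggest computational tools will play a role here. A clean way to present the construction is via a band/surface description of $4_1\#9_{10}$: if one can give a Seifert-type surface on which three band moves (or crossing changes realized as band twists) trivialize the knot, the bound $u(4_1\#9_{10})\leq 3$ is immediate.

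The principal obstacle is the combinatorial discovery of the right diagram $D$ and the right triple of crossings. Nothing in general theory predicts that such a configuration exists, so the search is a priori unguided; it is almost certainly the outcome of a computer-assisted exploration over candidate diagrams (Gordian-adjacency neighbours of $9_{10}$, bridge presentations, band-presentation variants, etc.) and triples of crossings within each. Once the triple is found, the remainder of the proof becomes a diagrammatic check that a reader can confirm by inspection of a clearly labeled figure, together with a short invariant computation certifying that the post-change diagram is indeed trivial.
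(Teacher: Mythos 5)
Your overall strategy is the same as the paper's: quote the known values $u(4_1)=1$ and $u(9_{10})=3$ (the latter is due to Owens \cite{ow08}, via Heegaard Floer obstructions, as you guessed), and then exhibit an explicit diagram of $4_1\#9_{10}$ together with three crossing changes that unknot it. The difficulty is that for a statement of this kind the witness \emph{is} the proof, and your proposal stops exactly where the proof has to begin: you never produce the diagram or the triple of crossings, and you correctly observe that no general theory predicts their existence. As written, nothing is established beyond the (true but already known) facts about the summands. The paper supplies the missing content concretely: a specific $15$-crossing DT code for $4_1\#9_{10}$, a first crossing change yielding the knot $K15n4866$ (whose unknotting number is at most $2$ by the authors' Bernhard--Jablan computations \cite{bh21}), a second crossing change in the same diagram yielding $8_{14}$, and a final crossing change in a minimal diagram of $8_{14}$ yielding the unknot; each identification is verified in SnapPy \cite{snappy}. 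Note that the paper's proof is staged --- crossing change, isotopy, crossing change, isotopy, crossing change --- rather than three simultaneous changes in one diagram; this is logically equivalent for bounding $u$, and the paper separately records a single $35$-crossing diagram in which three simultaneous changes suffice, which is the form you describe.

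Two smaller points. First, your proposed certificate that the final diagram is trivial --- computing the Alexander, Jones, or BLM/Ho polynomials --- does not certify unknottedness, since none of these is a complete invariant; you need either an explicit reduction to the zero-crossing diagram (which the paper obtains via SnapPy's simplification) or an identification of the complement. Second, the relevant software here is SnapPy/SageMath, not \GAP{} or \KBMAG{}; and the staged approach additionally requires knowing $u(8_{14})=1$ and identifying the intermediate knots, which is where the authors' precomputed Bernhard--Jablan unknotting data enters both the search and the proof.
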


\begin{theorem}\label{thm:five-one}
The connected sum $K_5=5_1\#8_2$, of the knots $5_1$ and $8_2$, with mirroring chosen so that
$\sigma(K_5)=0$, satisfies $u(5_1)=2$, $u(8_2)=2$ and $u(5_1\#8_2)\leq 3$.
\end{theorem}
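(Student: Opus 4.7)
The equalities $u(5_1)=2$ and $u(8_2)=2$ are classical; both follow at once from the signature bound $u(K)\geq |\sigma(K)|/2$, since $|\sigma(5_1)|=|\sigma(8_2)|=4$, together with an explicit 2-crossing unknotting in the standard minimal diagrams of the two knots. The substantive content of the theorem is therefore the upper bound $u(5_1\#8_2)\leq 3$.

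My first step would be to fix the mirroring. Since $\sigma$ is additive under connected sum and each of $\sigma(5_1),\sigma(8_2)$ is $\pm 4$, the requirement $\sigma(K_5)=0$ forces the two summands to carry opposite signatures, so one is obliged (up to an overall mirror) to take $K_5=5_1\#\overline{8_2}$. This normalization is essential, not cosmetic: with matching signs one would have $|\sigma(K_5)|=8$, and the signature bound would yield $u(K_5)\geq 4$, ruling out any 3-crossing unknotting immediately.

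The core step is then to produce an explicit sequence of three crossing changes that trivializes $K_5$. I would begin with a standard composite diagram for $5_1\#\overline{8_2}$, obtained by juxtaposing the minimal diagrams of the two summands and joining them with a connect-sum band, and search (by computer if necessary) among triples of crossings for one whose simultaneous change produces a diagram of the unknot. For each candidate triple the resulting knot can be identified by Reidemeister simplification together with computation of its Alexander and Jones polynomials; candidates that do not give the unknot are almost always ruled out by a nontrivial Alexander polynomial. Guided by the mechanism used for $7_1\#\overline{7_1}$ in \cite{bh25}, a natural first place to search is among triples pairing one crossing inside the $5_1$ summand with two crossings of the $8_2$ summand located near the connect-sum band.

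The main obstacle is the search itself, together with the certification that a given post-change diagram really represents the unknot. The composite diagram has only about 13 crossings, so the space of candidate triples is modest, but verifying triviality of any particular candidate may require a nontrivial Reidemeister sequence or an unknot-recognition routine rather than elementary simplification, and care must be taken that the chosen mirroring is respected throughout the search. Once a successful triple is identified, the proof concludes by exhibiting the labeled crossing changes and the explicit reduction of the resulting diagram to the trivial knot.
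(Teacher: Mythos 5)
Your treatment of $u(5_1)=u(8_2)=2$ and of the mirroring constraint is correct and matches the paper: the signature bound of Murasugi gives the lower bounds, explicit two-crossing unknottings in the minimal diagrams give the upper bounds, and additivity of signature forces opposite mirrorings of the summands (indeed the paper makes exactly this point, noting that the wrong mirroring gives $|\sigma|=8$ and hence $u\geq 4$).

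The gap is in your plan for the upper bound $u(5_1\#8_2)\leq 3$. You propose to search for a \emph{triple of crossings in a single fixed diagram} (the standard $13$-crossing composite diagram) whose simultaneous change yields the unknot. But the unknotting number permits isotopy between successive crossing changes, and the paper's proof makes essential use of this freedom: it starts not from the $13$-crossing composite diagram but from a particular $15$-crossing diagram of $5_1\#8_2$; one crossing change there produces the knot $K15n72383$; and the paper then states explicitly that \emph{no} crossing change in that resulting diagram lowers the unknotting number --- one must pass to a different one of the $88$ distinct $15$-crossing diagrams of $K15n72383$ before the second useful crossing change (to $10_{129}$) becomes visible, and likewise re-draw $10_{129}$ before the final change. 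So a search confined to simultaneous changes in one small diagram is quite likely to come up empty; the analogous ``all-in-one'' diagram the authors exhibit for $4_1\#9_{10}$ has $35$ crossings, obtained only after tracking the unknotting crossings backward through the isotopies. Your proposal as written omits the idea that makes the construction work: iterating (crossing change, re-identify the knot, pass to a new diagram of it). Relatedly, your search criterion (``does this triple give the unknot?'') is far less tractable than the paper's, which uses precomputed Bernhard--Jablan unknotting numbers for $15$-crossing knots to recognize, after a \emph{single} crossing change, an intermediate knot already known to satisfy $u\leq 2$; that is what reduces the problem to a feasible one-crossing-at-a-time search.
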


\smallskip

The statement of Theorem \ref{thm:four-one} is independent of mirroring, 
since the knot $4_1$ is isotopic to its mirror image.

We show, further, that the knot $3_1$ (the trefoil)
can be paired with the knot $10_6$ to yield a knot with unknotting
number 3. The knot $10_6$ 
has been listed in databases as having unknotting number $3$ since the 1990s. The
earliest instance we found of this is in 
the book {\it A Survey of Knot Theory} \cite{ka96}. However, we have also found
that this was a typo, corrected in notes published on the author's website \cite{kawcorr}; the 
correct statement is that $10_6$ has unknotting number 2 or 3. 
This information has recently been incorporated into the Knotinfo
database \cite{knotinfo}. This makes $10_6$ the
first, in the standard ordering of knots, of the (now) ten knots with up to 10 crossings whose
unknotting numbers remain unknown. If it is in fact the case that
$u(10_6)=3$, this would provide a further example of the failure of the Unknotting Additivity Conjecture.

\begin{theorem}\label{thm:three-one}
The connected sum $K_3=3_1\#10_6$, of the knots $3_1$ and $10_6$ with mirroring chosen so that
$|\sigma(K_3)|=2$, satisfies $u(3_1)=1$, $u(10_6)\in\{2,3\}$, and $u(3_1\#10_6)\leq 3$.
\end{theorem}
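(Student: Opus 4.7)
The plan is to address the three assertions in turn, with the upper bound $u(3_1\#10_6)\leq 3$ requiring the bulk of the work. The equality $u(3_1)=1$ is classical. For $u(10_6)\in\{2,3\}$, the upper bound is realized by a standard unknotting sequence in the knot tables, while the lower bound $u(10_6)\geq 2$ follows from the signature obstruction and is recorded in Kawauchi's corrected notes \cite{kawcorr} and the current Knotinfo entry \cite{knotinfo}; I would simply quote these and move on.

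The heart of the proof is to exhibit an explicit sequence of three crossing changes, interspersed with isotopies, taking $3_1\#10_6$ to the unknot, with the mirroring fixed so that $|\sigma(K_3)|=2$. Following the blueprint of Theorems \ref{thm:four-one} and \ref{thm:five-one}, I would begin with a carefully chosen planar diagram of $3_1\#10_6$, obtained by joining standard diagrams of the two summands along a connect-sum band, and then locate three crossings whose changes collectively undo the knot. The mirroring condition encodes a compatibility between the unknotting sequences of the two summands: as in the $7_1\#\overline{7_1}$ construction in \cite{bh25}, the key idea is to find a crossing change that does not respect the connected-sum decomposition, so that a single crossing change simultaneously simplifies both summands. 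Intermediate knots after each crossing change would be identified by computing classical invariants (Alexander polynomial, signature) and, if needed, a presentation of the knot group verified via \KBMAG{} and \GAP{}; the final diagram would be reduced to the standard unknot by an explicit sequence of Reidemeister moves, or by certifying a trivial fundamental group.

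The main obstacle, I expect, is the search for and verification of the right triple of crossing changes. The naive ``product'' unknotting sequence (one crossing change in $3_1$ together with the known two-change sequence for $10_6$) uses four changes in total; shaving off one requires a crossing change that acts effectively across the connect sum, and such a crossing is not diagrammatically obvious. The search is combinatorial, ranging over diagrams of the two summands and choices of crossings, and the signature condition $|\sigma(K_3)|=2$ both narrows the search by fixing the relative mirroring and ensures the invariant constraints along the sequence are consistent with reaching the unknot. Confirming that each intermediate knot is precisely what we claim, rather than a different knot whose first few invariants coincide, is likely to be the most delicate part, and I expect it will require a mix of diagrammatic reduction and computer-assisted verification of the sort developed in \cite{bh25}.
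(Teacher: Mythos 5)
Your proposal correctly identifies the overall strategy --- exhibit an explicit three--crossing-change unknotting sequence for a diagram of $3_1\#10_6$, verifying each intermediate knot --- but it stops at the point where the actual proof begins. For a statement of this kind the proof \emph{is} the data: a specific diagram, specific crossings, and certified identifications of the intermediate knots. You acknowledge that ``the main obstacle is the search for and verification of the right triple of crossing changes'' and then do not produce it, so the central claim $u(3_1\#10_6)\leq 3$ is not established. The paper supplies exactly this missing content: a 15-crossing DT code for $3_1\#10_6$, a first crossing change yielding the knot $K15n9318$ (known from the authors' Bernhard--Jablan computations to have unknotting number at most $2$), a second crossing change yielding $K12a1135$ (which has unknotting number one), and a final unknotting crossing, all verified in SnapPy. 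Your proposal has no substitute for the precomputed $u_{BJ}$ data that certifies the middle step, and proposing to identify intermediate knots ``by computing classical invariants'' is not sufficient --- the paper uses hyperbolic-geometry-based identification precisely because Alexander polynomial and signature do not pin down a 15-crossing knot.

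There is also a structural subtlety your plan would miss: after the first crossing change, there is \emph{no} crossing change in the resulting 15-crossing diagram of $K15n9318$ that lowers the unknotting number. One must pass, by isotopy, to a \emph{different} minimal-crossing diagram of $K15n9318$ (the paper notes it has 221 distinct 15-crossing diagrams) before the second productive crossing change can be made. A search confined to a single initial diagram of $3_1\#10_6$, or to choices of crossings within one fixed diagram of each intermediate knot, would therefore fail to find the sequence. Finally, two smaller points: the bound $u(10_6)\leq 3$ should be witnessed explicitly (the paper exhibits unknotting triples in the minimal diagram) rather than quoted from a table whose reliability on this very entry is part of the paper's story; and the lower bound $u(10_6)\geq 2$ does follow from $|\sigma(10_6)|=4$ via Murasugi, as you say, so that part is fine.
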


\smallskip

Consequently, when it comes to the additivity of unknotting number under connected sum,
there are now examples which 
demonstrate that `$1+3\leq 3$' and `$2+2\leq 3$'.
In particular, there exists a knot ($9_{10}$, in Theorem \ref{thm:four-one}) 
with the property that connected sum with some other non-trivial knot
($4_1$, from Theorem \ref{thm:four-one}) does not raise the unknotting number! 

We propose to introduce the term symbiont to describe this relationship; that is, 
two knots $K$ and $K^\prime$ are \emph{symbionts} if $u(K\#K^\prime)<u(K)+u(K^\prime)$.
Our rationale for this term is that the two knots are 
in essence `helping' one another to unknot; 
since the unlinking number of the disjoint union $K\coprod K^\prime$
is $u(K)+u(K^\prime)$ (a crossing change to unknot one component does not
change, and therefore cannot help, to unknot the other), 
taking a connected sum makes it easier
to unknot the pair than can be done separately. 

Note that symbiosis
must take into account the mirroring of the knot. 
While $7_1$ and its mirror $\overline{7_1}$ are symbionts, for example,
from the authors' previous work \cite{bh25},
$7_1$ and $7_1$ are not. And Theorem \ref{thm:five-one} shows that, if 
representatives of $K=5_1$ and $K^\prime=8_2$ are chosen so that
$\sigma(K)=-\sigma(K^\prime)=4$, then $K$ and $K^\prime$ are symbionts,
but $K$ and $\overline{K^\prime}$ are not 
(since $\sigma(K\# \overline{K^\prime})=8$, and so $u(K\# \overline{K^\prime})\geq 4$, by \cite[Theorem~10.1]{mu65}).

The examples presented in this paper were found using the same 
basic strategy as those originally found in \cite{bh25}. 
The idea is to find a diagram for each connected sum,
together with a crossing change in each diagram, that results in a 
knot whose unknotting number we know is too low for 
the Unknotting Additivity Conjecture. The data of 
unknotting numbers we use comes from the authors'
work \cite{bh21} on the Bernhard-Jablan Conjecture~\cite{bernhard},\cite{jablan98}.
This conjecture asserted that every knot $K$ possesses
a minimum crossing number projection $D$ and a 
crossing change in $D$ to a diagram $D^\prime$ of
a knot $K\prime$ with $u(K^\prime)=u(K)-1$.
This conjecture was disproved by the authors in \cite{bh21}.
In that work the authors introduced the \emph{Bernhard-Jablan 
unknotting number} $u_{BJ}(K)$, which is an upper bound for 
$u(K)$ computed, recursively, by finding the smallest value of $u_{BJ}(K^\prime)$
among all of the knots one crossing change away from 
$K$ in all of its minimial crossing diagrams, and adding $1$.
That is, the quantity $u_{BJ}(K)$ actually has the property
that the conjecture asserted for $u(K)$. It is effectively computable, 
provided one can determine all of the minimum crossing 
diagrams $K$ and, recursively, all of the minimal diagrams of the
knots $K^\prime$ obtained by changing a crossing in one of the
diagrams for $K$. Details can be found in \cite{bh21}. 
The authors carried out this computation for 
all prime knots with 15 or fewer crossings; these values
were used as a stand-in for the unknotting number, in
our searches.

To find the examples presented here, we carried out our searches by repeatedly building
random diagrams of connected sums $K\#K^\prime$, with $K$ equal to one of the knots
$3_1$, $4_1$, and $5_1$, and the other summand $K^\prime$ taken
from knots with known unknotting number $3$ (in the case of $3_1$ and $4_1$) or 
$2$ (in the case of $5_1$), in Snappy/SageMath
\cite{snappy},\cite{sagemath}. We then carried out random crossing changes
in these diagrams, searching for knots $K^{\prime\prime}$ with BJ-unknotting number,
and hance also unknotting number, at most two.
Since unknotting number changes by at most one 
under crossing change, this would force the unknotting number of the connected sum
to be at most $3$, and so $K$ and $K^\prime$ would satisfy 
$u(K\#K^\prime)<u(K)+u(K^\prime)$. Our searches
succeeded in finding the three pairs described in Theorems
\ref{thm:four-one}, \ref{thm:five-one} and \ref{thm:three-one}. 

Diagrams demonstrating our results were in fact found dozens of times for each pair.
In the end, each example that we found had diagrams which established the results and were 
surprisingly small; more specifically, the diagrams we present each have 15 crossings. 
The details of the proofs of 
Theorems \ref{thm:four-one}, \ref{thm:five-one} 
and \ref{thm:three-one} for each pair are given in Section \ref{sec:counter}.

As noted above, after finding the result in Theorem \ref{thm:three-one} we learned, 
as a result of our search of the literature,
that the summand $10_6$ we found for the knot $3_1$ did not actually 
have known unknotting number $3$.
This, and some of its consequences for unknotting number, is discussed 
further in Section \ref{sec:ten-six}. 

Just as with our first example, $7_1\#\overline{7_1}$, from \cite{bh25}, any knot $K$ that
has one of $4_1$ or $5_1$ in a minimal unknotting 
sequence for $K$
also has the corresponding knot ($9_{10}$ or $8_2$, respectively) as a symbiont, 
and so their connected sum can be unknotted more efficiently than the 
Unknotting Additivity Conjecture predicted, as discussed in Section \ref{sec:further_examples}.
Many knots in the standard knot tables have this property;
we list some of them in Section \ref{sec:further_examples}. 
We note that as a consequence, we can improve upon~\cite[Corollary~2.1]{bh25}:

\vspace{0.1in}

\noindent{\bf Corollary~4.2.}
\emph{Every non-trivial torus knot, except possibly for the trefoil knot $T(2,3)$, 
possesses a symbiont.}

\vspace{0.1in}

In Section \ref{sec:further_examples} we also list those knots that have $3_1$ in a minimal unknotting sequence,
in case there may be a proof in the future that $u(10_6)=3$.


\section{Proofs of the Theorems\label{sec:counter}}


The proof of each of the theorems of the introduction consists of
finding a planar diagram for each connected sum to use for the
first crossing change, and demonstrating an unknotting sequence that is
shorter than the sum of the unknotting numbers of the knot summands. 
As mentioned in Section \ref{sec:intro}, each of our connected sums have a
15-crossing diagram which exhibits the first of the needed crossing 
changes. Here we describe each diagram, and the intermediate knots in
their unknotting sequences, using Dowker-Thistlethwaite (DT) codes \cite{dt83}.
Python code, for use in SnapPy, to carry out all of the needed verifications can be 
found in Section \ref{sec:verify}.

\smallskip

\begin{proof}[Proof of Theorem \ref{thm:four-one}]
Starting with the connected
sum $K_4$ of the knots $4_1$ and $9_{10}$, the knot $K_4$ has a 15-crossing 
diagram with a DT code given by

\begin{center}
$[6, -10, 24, \underline{20}, -4, -22, -8, 26, 28, 30, -12, -2, 14, 18, 16]$,\hskip.2in (*)
\end{center}

\noindent as identified by SnapPy. In Figure \ref{fig:connsum41b} we demonstrate
an isotopy from the connected sum decomposition $4_1\#9_{10}$
of the knot to the 15 crossing diagram defined by the DT code (*).

\begin{figure}[h]
\begin{center}
\includegraphics[width=5in]{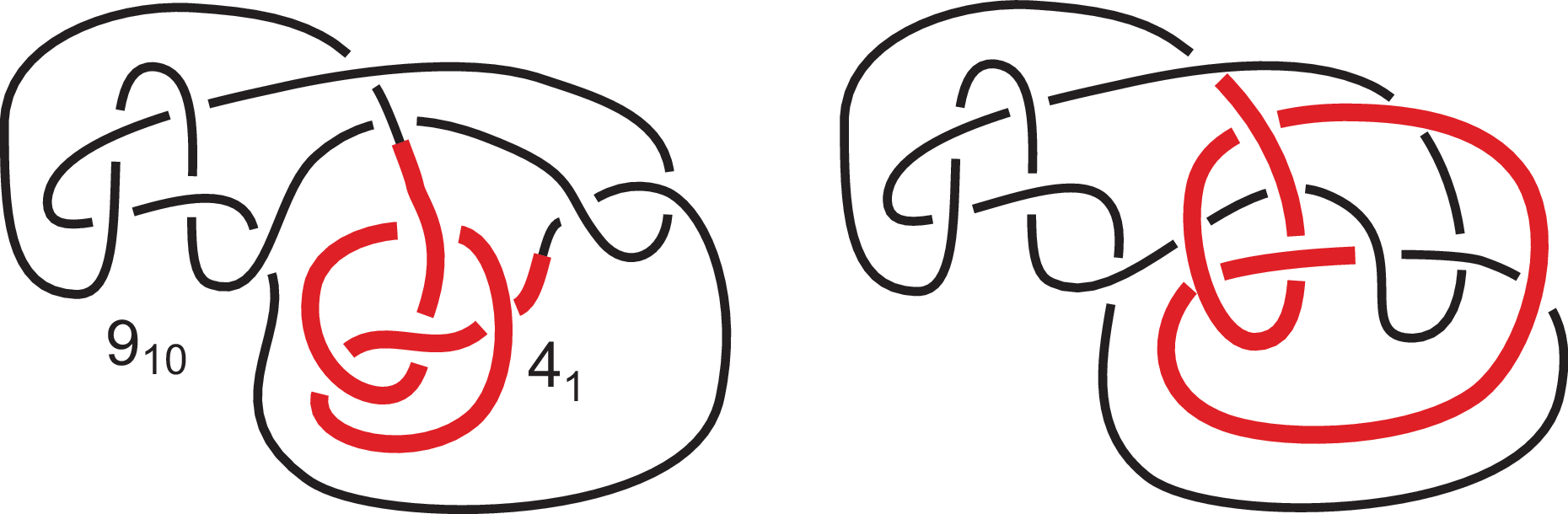}
\caption{Initial diagram for $4_1\#9_{10}$}\label{fig:connsum41b}
\end{center}
\end{figure}

\noindent Changing one crossing, underlined in the DT code (*), yields the diagram
with DT code 

\begin{center}
$[6, -10, 24, -20, -4, -22, -8, 26, 28, 30, -12, \underline{-2}, 14, 18, 16]$,\hskip.2in (**)
\end{center}

\noindent which SnapPy identifes as the knot $K15n4866$. This knot, from the authors'
work on the Bernhard-Jablan Conjecture \cite{bh21}, has unknotting number at most $2$.
There is, in fact, a crossing change in this diagram, underlined in the DT code (**),
to a knot with unknotting number one; the DT code

\begin{center}
$[6, -10, 24, -20, -4, -22, -8, 26, 28, 30, -12, 2, 14, 18, 16]$,
\end{center}

\noindent represents the knot $8_{14}$. The corresponding crossing changes in the 
knot diagram on the right in Figure \ref{fig:connsum41b} are 
shown in Figure \ref{fig:sum_to_814}.

\begin{figure}[h]
\begin{center}
\includegraphics[width=2.5in]{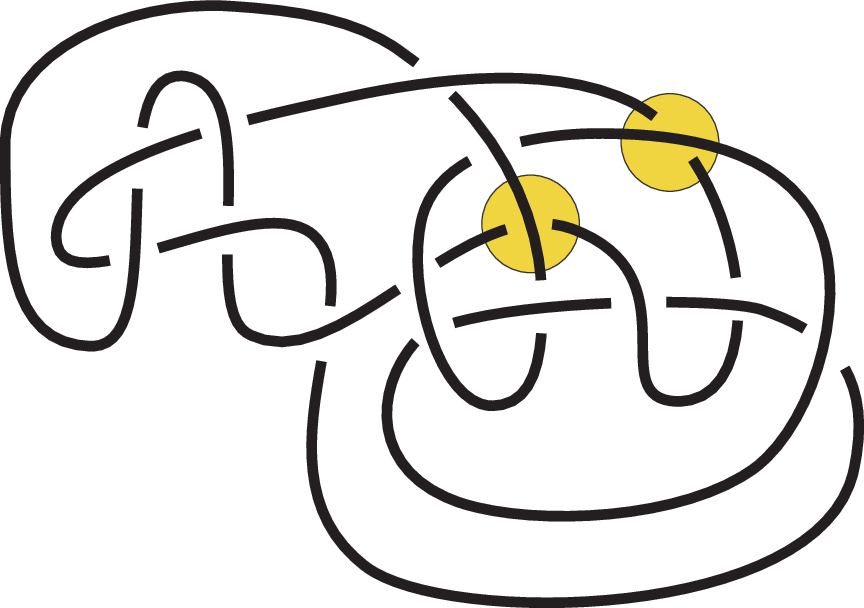}
\caption{Crossing changes to obtain the knot $8_{14}$}\label{fig:sum_to_814}
\end{center}
\end{figure}

The knot $8_{14}$ has unknotting number one; a minimal crossing diagram 
for the knot has DT code

\begin{center}
$[-10, -6, -14, -12, -16, -8, \underline{-2}, -4]$,\hskip.2in (***)
\end{center}

\noindent and a single crossing change, underlined in the DT code (***), yields the DT code

\begin{center}
$[-10, -6, -14, -12, -16, -8, 2, -4]$,
\end{center}

\noindent which is a diagram of the unknot. Consequently, $K_4=4_1\#9_{10}$ can be 
turned into the $8_{14}$ via the two crossing changes marked in 
Figure \ref{fig:sum_to_814},
which, after isotopy, can be transformed to the unknot by one further
crossing change. Therefore, $u(4_1\#9_{10})\leq 3$. 
On the other hand, $u(4_1)=1$, and the unknotting number $u(9_{10})=3$ was established by 
Owens \cite{ow08} using Heegaard Floer homology techniques. 
So $u(4_1)+u(9_{10})=1+3=4>3\geq u(4_1\#9_{10})$, and so 
the knot $9_{10}$ is a symbiont for the knot $4_1$. This establishes the
failure of additivity of the unknotting number for the pair $\{4_1,9_{10}\}$ ,
completing the proof of Theorem \ref{thm:four-one}.
\end{proof}

\smallskip

We note that Theorem \ref{thm:four-one} can be proved using three crossing changes in a 
single, relatively small, diagram for the knot $4_1\#9_{10}$. We obtain this
diagram by tracking the unknotting crossing for $8_{14}$ from its standard
8-crossing diagram through an isotopy to the 15-crossing diagram in Figure
\ref{fig:sum_to_814}. After some 
simplifying isotopies of the resulting diagram, we found the 
diagram for $4_1\#9_{10}$ given in Figure \ref{fig:all-in-one-41}. This has
35 crossings, and changing the indicated three crossings yields a 
diagram for the unknot.

\begin{figure}[h]
\begin{center}
\includegraphics[width=2.5in]{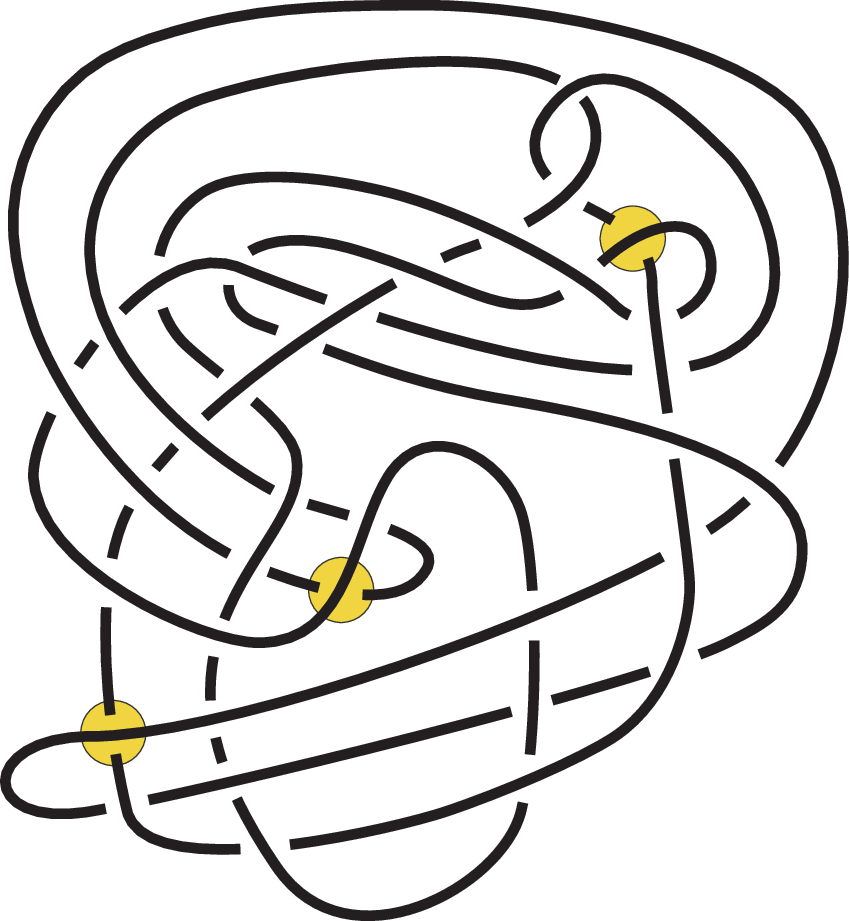}
\caption{From $4_1\#9_{10}$ to the unknot}\label{fig:all-in-one-41}
\end{center}
\end{figure}

\smallskip

\begin{proof}[Proof of Theorem \ref{thm:five-one}] 
Starting with the connected
sum $K_5$ of the knots $5_1$ and $8_2$, the knot $K_5$ has a 15-crossing 
diagram with a DT code given by

\begin{center}
$[6, \underline{12}, 24, -14, -16, -18, -2, -8, -10, 26, 28, 30, 4, 20, 22]$,\hskip.2in ($\dagger$)
\end{center}

\noindent as identified by SnapPy. In Figure \ref{fig:connsum51} we demonstrate an
isotopy from the standard sum diagram for $5_1\#8_2$ to the diagram given by the 
DT code in ($\dagger$).

\begin{figure}[h]
\begin{center}
\includegraphics[width=5in]{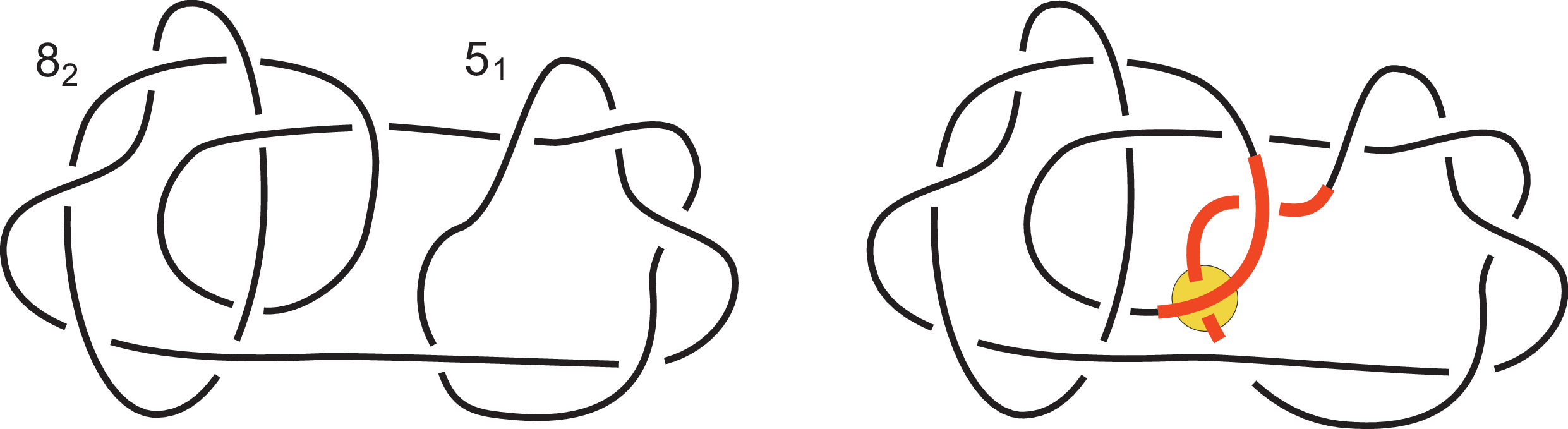}
\caption{Initial diagram for $5_1\#8_2$}\label{fig:connsum51}
\end{center}
\end{figure}

A single crossing change
in this diagram, underlined in the DT code ($\dagger$), yields a knot with DT code 

\begin{center}
$[6, -12, 24, -14, -16, -18, -2, -8, -10, 26, 28, 30, 4, 20, 22]$\hskip.2in ($\dagger\dagger$)
\end{center}

\noindent which SnapPy identifies as the knot $K15n72383$; 
the crossing change is indicated in Figure \ref{fig:connsum51}. This knot, from the 
authors' work on the Bernhard-Jablan Conjecture \cite{bh21}, has unknotting
number at most $2$. There is no crossing change in this diagram which will
lower the unknotting number; but $K15n72383$
has 88 distinct 15-crossing diagrams, and one of these has the DT code

\begin{center}
$[\underline{6}, 12, -16, 24, 26, 28, 20, 22, -30, -4, 14, 2, 8, 10, -18]$.\hskip.2in ($\dagger\dagger\dagger$)
\end{center}

\noindent SnapPy can verify that this and the previous DT code represent the same
knot. Changing a crossing in this diagram, underlined in the DT code ($\dagger\dagger\dagger$), 
yields the knot with DT code

\begin{center}
$[-6, 12, -16, 24, 26, 28, 20, 22, -30, -4, 14, 2, 8, 10, -18]]$
\end{center}

\noindent which SnapPy identifies as representing the knot $10_{129}$. This knot has
unknotting number one; from one of its DT codes,

\begin{center}
$[14, 8, 18, \underline{12}, -16, 4, 2, 20, -10, 6]$\hskip.2in ($\dagger \dagger \dagger \dagger$)
\end{center}

\noindent a single crossing change, underlined in the DT code ($\dagger \dagger \dagger \dagger$), 
yields the knot with the DT code

\begin{center}
$[14, 8, 18, -12, -16, 4, 2, 20, -10, 6]$
\end{center}

\noindent which can be isotoped to a knot with zero crossings, i.e., the unknot.
Consequently, $K_5=5_1\#8_2$ can be turned into the unknot by a 
crossing change, isotopy, crossing change, isotopy, and crossing change,
and so has unknotting number at most $3$. The knots $5_1$ and $8_2$ both have
signature $\pm 4$, depending on mirroring, and so both have
unknotting number at least $2$ \cite[Theorem~10.1]{mu65}. 
Any two crossing changes in the minimal crossing diagram for $5_1$ 
will result in the unknot, and there are five sets of $2$ crossing changes in
the minimal crossing diagram for $8_2$ that will result in the unknot. 
So $u(5_1)=u(8_2)=2$, and so 
$u(5_1)+u(8_2)=2+2=4>3\geq u(5_1\#8_2)$. Thus the knot $8_2$ is a symbiont 
for the knot $5_1$, establishing the
failure of additivity of unknotting number under connected sum for this pair. This completes
the proof of Theorem \ref{thm:five-one}. 
\end{proof} 


\smallskip

As mentioned in Section \ref{sec:intro}, our final example, the trefoil knot $3_1$,
may also have a symbiont as well, in the knot $10_6$. 
We show here that the knot $K_3=3_1\#10_6$, with summands
mirrored so that the knot has signature $\sigma(K_3)=\pm 2$, has unknotting 
number at most $3$. After our demonstration of this, we will discuss the status of
$u(10_6)$ in Section \ref{sec:ten-six}.

\smallskip

\begin{figure}[h]
\begin{center}
\includegraphics[width=5in]{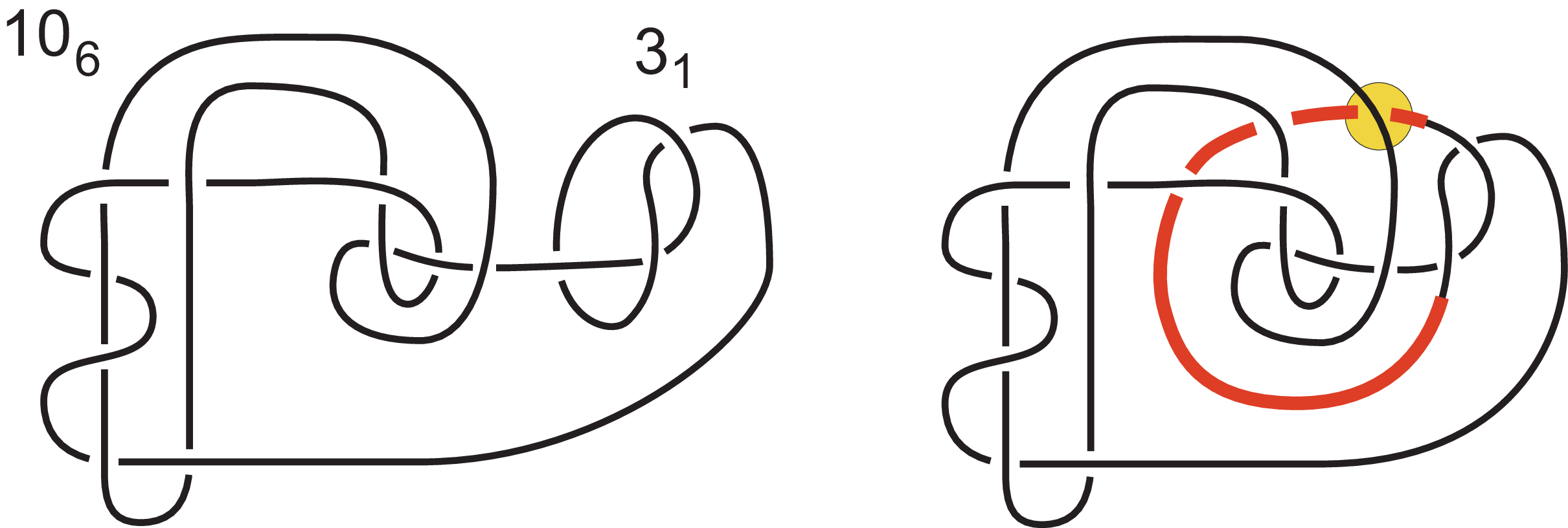}
\caption{Initial diagram for $3_1\#10_6$}\label{fig:connsum31}
\end{center}
\end{figure}

\begin{proof}[Proof of Theorem \ref{thm:three-one}]  Starting with the connected
sum $K_3$ of the knots $3_1$ and $10_6$, the knot $K_3$ has a 15-crossing 
diagram with a DT code given by

\begin{center}
$[4, 14, 20, -24, -26, -28, 16, 2, -22, \underline{-12}, -30, -18, -8, -10, -6]$\hskip.2in ($\ddagger$)
\end{center}

\noindent as identified by SnapPy \cite{snappy}. In Figure \ref{fig:connsum31} 
we show an isotopy from the standard connected sum diagram
for $3_1\#10_6$ to the diagram with DT code ($\ddagger$). A single crossing change
in this diagram, underlined in the DT code ($\ddagger$) and shown in a circle in Figure \ref{fig:connsum31})
yields a knot with DT code 

\begin{center}
$[4, 14, 20, -24, -26, -28, 16, 2, -22, 12, -30, -18, -8, -10, -6]$\hskip.2in ($\ddagger\ddagger$)
\end{center}

\noindent which SnapPy identifies as the knot $K15n9318$. This knot, from the 
authors' work on the Bernhard-Jablan Conjecture \cite{bh21}, has unknotting
number at most $2$. There is no crossing change in this diagram which will
lower the unknotting number; but the knot $K15n9318$
has 221 distinct 15-crossing diagrams, and one of these has the DT code

\begin{center}
$[4, 14, 20, 22, 24, \underline{16}, -26, 2, -28, 10, 6, 8, -30, -12, -18]$.\hskip.2in ($\ddagger\ddagger\ddagger$)
\end{center}

\noindent SnapPy can be used to verify that this code and the DT code ($\ddagger\ddagger$) represent the same
knot. Changing a crossing in this diagram, underlined in the DT code ($\ddagger\ddagger\ddagger$) yields the knot with DT code

\begin{center}
$[4, 14, 20, 22, 24, -16, -26, 2, -28, 10, 6, 8, -30, -12, -18]$
\end{center}

\noindent which SnapPy identifies as representing the knot $K12a1135$. This knot has
unknotting number one; from one of its DT codes,

\begin{center}
$[-16, -14, \underline{-24}, -18, -22, -20, -2, -4, -6, -8, -10, -12]$ ($\ddagger \ddagger \ddagger \ddagger$)
\end{center}

 \noindent a single crossing change, underlined in the DT code ($\ddagger \ddagger \ddagger \ddagger$), yields the knot with the DT code

\begin{center}
$[-16, -14, 24, -18, -22, -20, -2, -4, -6, -8, -10, -12]$
\end{center}

\noindent which can be simplified to a knot with zero crossings, i.e., the unknot.
Consequently, by reversing this process, we can go from the unknot to 
$K12a1135$ to $K15n9318$ by two crossing changes, with isotopies,
and, after isotopy, to $3_1\#10_6$ by a third crossing change. 
So $u(3_1\#10_6)\leq 3$.

The knot $3_1$ has unknotting number 1; any crossing change in its minimal
crossing diagram results in the unknot. The knot $10_6$ has 
$|\sigma(10_6)|=4$ so $u(10_6)\geq 2$, while for its minimal crossing projection
there are 35 sets of three crossings such that changing the crossings results in 
a diagram of the unknot;
so $u(10_6)\leq 3$. This gives $u(10_6)\in\{2,3\}$,
completing the proof of Theorem \ref{thm:three-one}.
\end{proof} 


\section{The unknotting number of $10_6$ and its consequences}\label{sec:ten-six}


As noted in Section~\ref{sec:intro}, the unknotting number of the knot $10_6$ has 
for many years had a reported value 
of $3$. This value can be found
in several sources, such as in the
table of Kawauchi \cite{ka96}. However, a list of corrections
to Kawauchi's table \cite{kawcorr} changes this to 2 \underline{or} 3.

\smallskip

If it were true that $u(10_6)=3$, then from Theorem~\ref{thm:three-one}
we have $u(3_1\#10_6)\leq 3<4=u(3_1)+u(10_6)$,
and so the knot $10_6$ would be a symbiont for the knot $3_1$ (and vice versa), 
establishing the
failure of additivity for the unknotting number of $3_1\#10_6$. 

On the other hand, if this is not true, and $u(10_6)=2$, then $K=10_6$ 
would be a counterexample
to the Bernhard-Jablan Unknotting Conjecture \cite{bernhard},\cite{jablan98}
with the smallest possible crossing number.
In particular, for the knot $10_6$,
from its unique (up to flypes) 10-crossing projection, the knots
obtained by changing a single crossing are $5_1$, $8_2$, and $8_6$,
all of which have unknotting number 2. The knots $5_1$ and $8_2$ 
were discussed above in the proof of Theorem \ref{thm:five-one},
while $u(8_6)=2$ was established in \cite{kamu86}.
The knot $10_6$ therefore has Bernhard-Jablan unknotting number $u_{BJ}(K)=3$. Consequently, 
if $u(10_6)=2$ we would have $u(10_6)=2<3=u_{BJ}(10_6)$. 

Which of these two situations holds is an interesting open problem.


\section{Further examples, via Gordian Adjacency}\label{sec:further_examples}


We have shown that the knot $4_1$ (and possibly $3_1$), with unknotting number one, 
and the knots $5_1$ and $8_2$, with unknotting number two, possess
symbiont knots $K$ so that connected sum with $K$ has unknotting
number lower than what is predicted by the Unknotting Additivity Conjecture.
These knots can be used to find still more knots which possess
symbionts, pairing them with (the same) knots $K$, to build
connected sums which fail the Unknotting Additivity Conjecture. 

The basic idea, as explored for the
knot $7_1$ in \cite{bh25}, is the following.
Suppose that $K_1$ is a knot with $u(K_1)=n$, and $K_2$ is a symbiont for $K_1$. 
Suppose also that $K$ is a knot with
$u(K)=n+m$, and $m$ crossing changes in a diagram for $K$ yields a diagram
for $K_1$. This is precisely the assertion that $K_1$ is Gordian adjacent to $K$.
Then $K\#K_2$ can be unknotted by making $m$ crossing changes
in a diagram for $K\#K_2$, resulting in a diagram for $K_1\#K_2$, 
and then we can make fewer than $n+u(K_2)=u(K_1)+u(K_2)$ crossing
changes to $K_1\#K_2$ to reach the unknot. So $u(K\#K_2)$ can be unknotted
with fewer than $m+(n+u(K_2))=u(K)+u(K_2)$ crossing changes, 
and so $K_2$ is also a symbiont for $K$. This gives:

\begin{lemma}\label{lemm:gordian} If $K_2$ is a symbiont for the knot
$K_1$ and $K_1$ is Gordian adjacent to $K$, then $K_2$ is a 
symbiont for $K$.
\end{lemma}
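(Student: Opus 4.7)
My plan is to argue directly from the definitions, formalizing the informal calculation given in the paragraph preceding the lemma. Write $n = u(K_1)$ and let $m$ denote the number of crossing changes separating $K$ from $K_1$ inside a minimal unknotting sequence for $K$ that passes through $K_1$; by the definition of Gordian adjacency, $m + n = u(K)$.

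First I would fix a diagram $D$ of $K$ in which these $m$ crossing changes, together with the intermediate isotopies, are made explicit. Forming the connected sum of $D$ with any diagram of $K_2$ produces a diagram of $K \# K_2$ in which those same crossings still lie entirely within the $K$-summand. Performing the crossing changes there yields a diagram of $K_1 \# K_2$; this is the only step that requires a brief justification, but it follows immediately from the observation that a crossing change supported in one summand of a connected-sum diagram descends to a crossing change on that summand alone, with the other summand unaffected.

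Next I would invoke the symbiont hypothesis $u(K_1 \# K_2) < u(K_1) + u(K_2)$, which guarantees an unknotting sequence for $K_1 \# K_2$ of length at most $n + u(K_2) - 1$. Concatenating the initial $m$ crossing changes with this sequence produces an unknotting sequence for $K \# K_2$ of length at most
\[
m + \bigl(n + u(K_2) - 1\bigr) \;=\; u(K) + u(K_2) - 1.
\]
Hence $u(K \# K_2) \leq u(K) + u(K_2) - 1 < u(K) + u(K_2)$, which is precisely the assertion that $K_2$ is a symbiont for $K$.

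The argument is essentially bookkeeping once the connected-sum observation above is in place, so I do not anticipate any substantive obstacle. The only point worth double-checking is that the intermediate isotopies inside the unknotting sequences for $K$ and for $K_1 \# K_2$ do not interfere with the other summand in the connected-sum diagram, which is clear since these isotopies may be chosen to be supported away from the short arcs used to form the connected sum.
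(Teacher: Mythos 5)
Your proof is correct and follows essentially the same route as the paper, which proves the lemma by exactly this bookkeeping: perform the $m$ crossing changes taking $K$ to $K_1$ inside the $K$-summand of a diagram of $K\#K_2$ to reach $K_1\#K_2$, then apply the symbiont hypothesis to finish in fewer than $u(K_1)+u(K_2)$ further changes, for a total strictly less than $u(K)+u(K_2)$. Your added remarks about the crossing changes and isotopies being supported in a single summand are a reasonable (if brief) formalization of a point the paper leaves implicit.
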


We can therefore find new knots that have symbionts by searching for 
knots which ($3_1$, provisionally, and) $4_1$, $5_1$, and $8_2$ 
are Gordian adjacent to. 

We can do this, experimentally, by searching for knots $L$
with known unknotting number $u(L)=2$ and for which one 
crossing change in a diagram for 
$L$ yields a diagram for ($3_1$ or) $4_1$. We have carried out such a 
search, for the 131 prime knots with 10 or fewer crossings which 
Knotinfo \cite{knotinfo} indicates have unknotting numbers equal to 
2. Using minimal crossing diagrams for these knots, obtained
using SnapPy \cite{snappy}, and changing crossings, we found that
24 of them are crossing adjacent to $4_1$. 
In addition, 50 of the knots are crossing adjacent to $3_1$; 
the two lists overlap in $10$ knots. In particular, 
the prime knots with crossing number at most 10 and $u(K)=2$ which we verified contain 
$4_1$ in a minimal unknotting sequence include the 24 knots in the set

\begin{center}
$S_2=\{8_{4},8_{12},8_{16},9_{8},9_{15},9_{17},9_{32},9_{40},10_{19},10_{35},
10_{36},10_{41},10_{45},10_{83}$,

$10_{89},10_{94},10_{105},10_{115},10_{121},10_{140},
10_{144},10_{155},10_{158},10_{163}\}$.
\end{center}

\noindent [Here we are using the numbering from Knotinfo for $10_{163}$; 
since SnapPy's knot database includes the Perko pair, it reports this knot as $10_{164}$.]
These knots $K$ therefore all share with $4_1$ the symbiont $9_{10}$, and so 
can be paired with a
this knot to give $u(K\#9_{10})<u(K)+u(9_{10})$. This inequality is, again, true for
any choice of mirroring, because this holds for $4_1\#9_{10}$; that is, 
both $9_{10}$ and $\overline{9_{10}}$ are symbionts for the knots in $S_2$.

Since both $5_1$ and $8_2$ are symbionts for one another, all of the knots in
the larger set $S_2^+=S_2\cup\{5_1,8_2\}$ are knots with unknotting number $2$
and possess symbionts. So the prime knots with crossing number at most 10 and unknotting
number 2 which possess symbionts include the knots in the set

\begin{center}
$S_2^+=\{5_{1},8_{2},8_{4},8_{12},8_{16},9_{8},9_{15},9_{17},9_{32},9_{40},10_{19},10_{35},
10_{36},10_{41}$,

$10_{45},10_{83},10_{89},10_{94},10_{105},10_{115},10_{121},10_{140},
10_{144},10_{155},10_{158},10_{163}\}$.
\end{center}

Carrying this approach a step further, we can search for prime knots
with $u(K)=3$ which contain one of the knots in
$S_2^+$ in a minimal unknotting
sequence. There are 25 prime knots with at most 10 crossings and
with unknotting number (known to be) 3, and
the ones which are crossing adjacent, in a miminal crossing number 
diagram, to one of the knots in the set $S_2^+$
include the 13 knots in the set

\begin{center}
$S_3=\{7_1,8_{19},9_{13},9_{38},9_{49},10_{2},
10_{46},10_{53},10_{103},10_{120},10_{134},10_{154},10_{161}\}$.
\end{center}

We note that only one of these knots, $10_{103}$, appears in $S_3$ 
from our experiment
through adjacency with a knot in $S_2$, being adjacent to the knot $8_{16}$. 
All of the other knots in $S_3$ were found through crossing adjacency 
with $5_1$ or $8_2$.
Because $9_{10}$ has symbiont $4_1$, the set $S_3^+=S_3\cup\{9_{10}\}$
gives 14 knots with unknotting number 3 that possess symbionts.
So the prime knots with crossing number at most 10 and unknotting
number 3 which possess symbionts include the knots in the set

\begin{center}
$S_3^+=\{7_1,8_{19},9_{10},9_{13},9_{38},9_{49},10_{2},
10_{46},10_{53},10_{103},10_{120},10_{134}$,

$10_{154},10_{161}\}$.
\end{center}

Finally, the four prime knots with at most 10 crossings and with 
unknotting number 4, namely the knots $9_1$, $10_{124}$, $10_{139}$, and $10_{152}$, 
are each crossing adjacent to either $7_1$ or $8_{19}$, using
minimal crossing diagrams, so each has, by Lemma \ref{lemm:gordian},
the knot $8_2$ as a symbiont, since 
$7_1$ and $8_{19}$ are in $S_3^+$ through adjacency to $5_1\in S_2^+$.
So the set of prime knots with crossing number at most 10 and unknotting
number 4 which possess a symbiont consists of the set

\begin{center}
$S_4=\{9_1,10_{124},10_{139},10_{152}\}$ .
\end{center} 

We note that the knot $5_1$ is the $(2,5)$ torus knot, and $8_{19}$ is the $(3,4)$ torus 
knot. These knots were excluded in Corollary~2.1 of \cite{bh25} which listed the torus knots
known to possess symbionts. We can therefore improve on part of that result.

\begin{corollary}\label{cor:torus}
Every non-trivial torus knot, except possibly for the trefoil knot $T(2,3)$, 
possesses a symbiont.
\end{corollary}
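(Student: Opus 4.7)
The plan is to reduce Corollary~\ref{cor:torus} to cases that have already been handled, either in \cite{bh25} or earlier in the present paper. Specifically, I would begin by invoking \cite[Corollary~2.1]{bh25}, which establishes the analogous statement with precisely three torus knots excluded from the conclusion: the trefoil $T(2,3)$, the knot $5_1 = T(2,5)$, and the knot $8_{19} = T(3,4)$. Consequently, the only work remaining is to exhibit a symbiont for each of $5_1$ and $8_{19}$.

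For $5_1$ nothing new is required: Theorem~\ref{thm:five-one} already produces $8_2$ as a symbiont. For $8_{19}$ I would appeal to the observation, recorded in the discussion of the set $S_3^+$ above, that $5_1$ appears in a minimal unknotting sequence of $8_{19}$ obtained from its standard $8$-crossing diagram; that is, $5_1$ is Gordian adjacent to $8_{19}$. With this adjacency in hand, Lemma~\ref{lemm:gordian} applied with $K_1 = 5_1$, $K_2 = 8_2$, and $K = 8_{19}$ immediately transports the symbiont $8_2$ of $5_1$ to a symbiont $8_2$ of $8_{19}$.

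The heart of the matter is therefore the Gordian adjacency $5_1 \rightsquigarrow 8_{19}$; everything else is citation or direct quotation of established results. That adjacency is a finite combinatorial verification, already carried out in the search that produced $S_3$: it suffices to exhibit a single crossing change in a minimal crossing diagram of $8_{19}$ whose result is (a diagram of) $5_1$. Once the two special cases $5_1$ and $8_{19}$ are dispatched in this way, the conclusion of the corollary is an immediate consequence of \cite[Corollary~2.1]{bh25}, with $T(2,3)$ being the only remaining exceptional case, consistent with the conditional status of Theorem~\ref{thm:three-one}.

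The main obstacle, such as it is, lies not in the logic but in correctly invoking the earlier results: one must confirm that the excluded torus knots in \cite[Corollary~2.1]{bh25} are exactly $T(2,3)$, $T(2,5)$, and $T(3,4)$, and that the Gordian adjacency used for $8_{19}$ is realized in a \emph{minimal} crossing diagram (which is needed so that Lemma~\ref{lemm:gordian} applies as stated). Both checks are routine in light of the preceding sections, so the corollary will follow with only a few lines of proof.
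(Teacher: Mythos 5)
Your argument is essentially the paper's own: it cites \cite[Corollary~2.1]{bh25} to reduce to the excluded torus knots $T(2,5)=5_1$ and $T(3,4)=8_{19}$, handles $5_1$ via Theorem~\ref{thm:five-one}, and handles $8_{19}$ via its crossing adjacency to $5_1$ (recorded in the construction of $S_3$) together with Lemma~\ref{lemm:gordian}. One small note: Lemma~\ref{lemm:gordian} as stated only requires Gordian adjacency (membership in a minimal unknotting sequence), not that the relevant crossing change occur in a minimal \emph{crossing} diagram, so your final caveat is stricter than necessary—though the adjacency used here does happen to be realized in a minimal crossing diagram.
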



\smallskip

We can apply the same analysis using the knot $3_1$.
The prime knots with crossing number at most 10 and $u(K)=2$ which 
we verified contain $3_1$ in a 
minimal unknotting sequence include the 50 knots in the set

\smallskip

\begin{center}
$S_2^\prime=\{5_{1},7_{4},7_{5},8_{2},8_{6},8_{8},8_{16},8_{18},9_{7},9_{8},
9_{20},9_{31},9_{32},9_{46},9_{47},9_{48},10_{5}$,

$10_{14},10_{19},10_{20},10_{25},10_{34},10_{36},10_{40},10_{43},10_{68},10_{69},
10_{74},10_{75},10_{83},10_{86}$,

$10_{92},10_{94},10_{106},10_{109},10_{111},10_{115},10_{116},10_{117},10_{121},10_{125},10_{127},10_{135}$,

$10_{145},10_{148},10_{149},10_{150},10_{151},10_{160},10_{163}\}$.
\end{center}

\smallskip

Note that we do not yet know if any of the knots in $S_2^\prime$ which are not 
in $S_2^+$ possess symbionts; 
all of them do, if $u(10_6)=3$, based on our Theorem \ref{thm:three-one}.
In the same way, those knots with at most 10 crossings and unknotting number 3 which 
are crossing adjacent, in a miminal crossing number 
diagram, to one of the knots in $S_2^\prime$ include the 22 knots
in the set

\begin{center}
$S_3^\prime=\{7_{1},8_{19},9_{6},9_{9},9_{10},
9_{13},9_{16},9_{35},9_{38},9_{49},
10_{2},10_{46},10_{53}$,

$10_{66},10_{80},10_{101},10_{103},10_{120},10_{128},10_{134},10_{154},10_{161}\}$,
\end{center}

\noindent a set which contains $S_3^+$.
The only prime knots with 10 or fewer crossings and
unknotting number 3 which are not in the set $S_3^\prime$ 
are the knots $9_3$, $10_{49}$, and $10_{142}$ .
We do not, as of this writing, know whether or not any of these
three knots,
or any of the knots in $S_3^\prime\setminus S_3^+$, can be shown
to be crossing adjacent to $4_1$, and so belong in $S_3^+$, using 
crossing changes in non-minimal diagrams.


\smallskip

Consequently, of the $131+25+4=160$ prime knots through 10 
crossings with known unknotting number greater than 1, at 
least $26+13+4=43$ have a symbiont $K$ so that connected sum 
with $K$ fails the Unknotting Additivity
Conjecture. If the unknotting number of $10_6$ is in fact $3$, 
this total rises to $64+22+4=90$ knots.
It would be very interesting to know which of the remaining
knots possess symbionts, although for the knots with
unknotting number 1, this will require finding an \emph{ab initio}
construction of an unexpectedly efficient unknotting sequence.
The same may also be true for many of the remaining knots with 
unknotting number 2, as well.


\section{The road forward}\label{sec:road}


As with the authors' earlier work \cite{bh25}, establishing that 
$u(7_1\#\overline{7_1})\leq 5$, the results of this paper
relied heavily on the data generated for the authors' 
work on the the Bernhard-Jablan Conjecture, which included finding
upper bounds (in the form of BJ-unknotting numbers $u_{BJ}(K)$)
for all of the prime knots through 15 crossings .
This enabled us to identify the 
middle steps in unexpectedly short unknotting sequences
for $4_1\#9_{10}$, $5_1\#8_2$, and (potentially) $3_1\#10_6$.
It is the authors' hope that, with these added examples of the
failure of additivity for unknotting number, we may be able to 
start to get a glimpse of the mechanism by which failure
is occuring. It may be too much to hope that we could succeed
in replicating this phenomenon for any non-trivial knot, but
we may, with enough examples, be able to identify enough 
structure to find further examples by using theory alone, 
without computation.

One observation worth note is that, while for the original
example, $7_1\#\overline{7_1}$, the authors in their testing 
found only one initial diagram which could be used
to construct the unexpectedly short unknotting sequence, 
for each of the connected sums described in this paper, 
we have rediscovered dozens of short unknotting sequences for them
in our computations thus far. However, for each
of the knots $4_1$, $5_1$ and (potentially) $3_1$, 
\underline{only} the knots $9_{10}$, $8_2$, and (potentially!)
$10_6$, respectively, have occured as symbionts in those examples, despite
the fact that we have extensively tested 
randomly generated diagrams of all other knots, through 10
crossings, with unknotting numbers 3 (for $4_1$ and $3_1$)
or 2 (for $5_1$), as potential symbionts for these knots. 
That is, we have 
not been able to discover a different symbiont for any of 
$4_1$, $5_1$ or (potentially!) $3_1$ among these collections
of knots. Any knot that $8_2$ is 
Gordian adjacent to is also a symbiont for the knot $5_1$, 
and the same is true with the roles reversed, but none of the 
other 130 knots with crossing number up to 10 and unknotting number 2 have 
managed to appear in any of our searches for symbionts. We find this
circumstance rather intriguing. 
It suggests that among all of these potential candidates
for symbionts, only $9_{10}$ `fits' with $4_1$, and 
only $8_2$ fits with $5_1$. This in turn could perhaps provide clues for 
understanding how to match knots with symbionts more 
generally, by exploring what might distinguish these knots
from all of the others. 

With the three examples now in our possession, 
$4_1\#9_{10}$, $5_1\#8_2$, and $7_1\#\overline{7_1}$,
together with Gordian adjacency,
we can show that just over 17.6 percent of the 
prime knots through 10 crossings possess symbionts.
If $u(10_6)=3$ can be established, then from
Theorem \ref{thm:three-one} and the discussion
in Section \ref{sec:further_examples} this total would rise to 
36.9 percent.
We suggest that, in fact, asymptotically, \underline{most} knots 
will turn out to have symbionts; as crossing number grows, most knots,
we suspect, will contain 
either $3_1$ or $4_1$ in some minimal unknotting sequence. 
It is known that every diagram of a non-trivial knot admits 
crossing changes to the trefoil knot $3_1$ \cite{tan89}, and the 
same is nearly true for the figure-eight knot $4_1$
(one must exclude connected sums of $(2,n_i)$ torus knots \cite{tan89}). Although this
does not mean that $3_1$ or $4_1$ occur in a minimal unknotting sequence, it does
suggest that they might often be found there.

In light of the fact that two, and potentially all three, of
the first three knots in the standard knot table,
lie in the class of knots possessing symbionts,
we pose the following `anti-conjecture' to the Unknotting Additivity Conjecture:

\begin{conjecture}\label{conj:no-additivity} 
For every non-trivial knot $K$, there is a symbiont
knot $K^\prime$ for which $u(K\#K^\prime)<u(K)+u(K^\prime)$. That is, 
every non-trivial knot $K$ has a connected sum
for which the Unknotting Additivity Conjecture is false.
\end{conjecture}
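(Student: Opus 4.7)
The natural plan is to induct on unknotting number, using Lemma~\ref{lemm:gordian} as the inductive step. If every knot $K$ with $u(K)=1$ admits a symbiont, then for any $K$ with $u(K)=n\geq 2$ a minimal unknotting sequence for $K$ necessarily visits a knot $K_1$ with $u(K_1)=1$ (since unknotting number drops by exactly one at each step of such a sequence); because $K_1$ is then Gordian adjacent to $K$, any symbiont for $K_1$ is a symbiont for $K$. The conjecture thus reduces entirely to its base case, namely that every knot of unknotting number one possesses a symbiont.

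For the base case my approach would be to seek a general band-surgery mechanism: an unknotting crossing for $K$ corresponds to a single twisted band attached to a disk whose boundary is $K$, and one would like to build a partner $K'$ so that two of the unknotting bands for $K'$ can be routed through the connected-sum sphere in a way that can be consolidated, after isotopy near the sphere, with the band for $K$. If such a construction could be made canonical from the data of an unknotting crossing for $K$, perhaps by taking $K'$ to be a controlled satellite or iterated cable of $K$ or $\overline{K}$, then $u(K\#K')<u(K)+u(K')$ would follow by producing the unknotting sequence explicitly; the proofs of Theorems~\ref{thm:four-one}, \ref{thm:five-one}, and \ref{thm:three-one} could serve as templates, though the fact that no two of them share the same combinatorial pattern is already a cautionary signal.

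The main obstacle is that there appears to be no uniform construction. The experiments described in Section~\ref{sec:road} produced essentially a unique successful partner for each of $3_1$, $4_1$, and $5_1$, despite extensive random search, which strongly suggests that symbiosis depends on a delicate alignment of concordance invariants. Every standard lower bound on $u$ (signature, Heegaard Floer $\tau$, Rasmussen $s$, $\Upsilon$, the slice genus) is additive under connected sum, so for any valid pairing all of these obstructions must leave exactly the right slack simultaneously, and there is no a priori reason this should always be possible. The hardest step will be to rule out the possibility that some knot $K$ with $u(K)=1$ is ``rigid'', in the sense that $u(K\#K')=1+u(K')$ for every non-trivial $K'$.

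A sensible intermediate target is to establish the conjecture for well-structured families of $u(K)=1$ knots, such as twist knots or the two-bridge knots of unknotting number one; in these cases the unknotting crossing is encoded by a rational tangle and one may hope to produce symbiont candidates as continued-fraction neighbors, verifying the needed inequality by an explicit surgery argument. Combined with Lemma~\ref{lemm:gordian}, even a restricted base-case result would cover a large portion of the knot table via Gordian adjacency and would represent a substantial advance beyond the three examples in the present paper.
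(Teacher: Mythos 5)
The statement you are addressing is posed in the paper as an open conjecture, not a theorem; the paper offers no proof of it, and your proposal does not supply one either. Your reduction step is genuinely correct and worth stating: since a minimal unknotting sequence for a knot $K$ with $u(K)=n\geq 2$ must pass through a knot $K_1$ with $u(K_1)=1$ (each step of a minimal sequence lowers the unknotting number by exactly one), $K_1$ is Gordian adjacent to $K$, and Lemma~\ref{lemm:gordian} then transports any symbiont of $K_1$ to $K$. This cleanly reduces Conjecture~\ref{conj:no-additivity} to the assertion that every unknotting-number-one knot has a symbiont, which is a sharper formulation than the paper makes explicit. (One caveat: Lemma~\ref{lemm:gordian} and the notion of symbiont are sensitive to mirroring, so the reduction should be phrased with the correct mirror of $K_1$ appearing in the minimal sequence.)

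The gap is the base case, which your proposal leaves entirely unproven. The band-surgery/satellite mechanism you sketch is a heuristic with no construction behind it, and the evidence in the paper cuts against any uniform recipe: the only known unknotting-number-one knot with a symbiont is $4_1$ (with $3_1$ still conditional on $u(10_6)=3$), each was found by exhaustive computer search, and for each knot essentially a unique partner emerged. Moreover, Scharlemann's theorem that $u(K\# K')=2$ when $u(K)=u(K')=1$ already rules out the simplest candidates for partners, so any symbiont of a $u=1$ knot must have $u(K')\geq 2$ and the sum must satisfy $u(K\# K')\leq u(K')$, a strong constraint for which no general mechanism is known. As you yourself note, one cannot currently exclude that some $u=1$ knot is ``rigid.'' So the proposal is a reasonable research plan, and its reduction step is a correct and useful observation, but it does not constitute a proof of the conjecture, which remains open.
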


\smallskip

We have focused here on the search for symbionts - knot pairs $K,K^\prime$
such that $K\#K^\prime$ fails additivity of unknotting number. With
the discovery of more symbiont pairs, it becomes even more interesting
to determine when unknotting additivity succeeds. There are several general results
which establish additivity, the first of which is the observation that
if $|\sigma(K)|=2u(K)$, $|\sigma(K^\prime)|=2u(K^\prime)$, and
$\sigma(K)\sigma(K^\prime)>0$, then $u(K\#K^\prime)=u(K)+u(K^\prime)$.
This follows directly from the additivity of signature \cite[Theorem~10.1]{mu65}. There is
also Scharlemann's result that $u(K\#K^\prime)=u(K)+u(K^\prime)$ when 
$u(K)=u(K^\prime)=1$ \cite{scha85}.
More specialized results include 
$u(\#_nK)=n$ if $u(K)=1$ and $\Delta_K(t)\neq 1$ \cite{ya08}, 
proved using the commutator subgroup of the knot group
and an associated lower bound on unknotting number \cite{mq06}, and 
$u((\#_n3_1)\#(\#_m\overline{3_1}))=n+m$ \cite{st02}, proved using 
3-move equivalence. There are also more isolated examples, such as
$u(4_1\#5_1)=u(4_1)+u(5_1)=3$ \cite{st04}, proved 
using the Brandt-Lickorish-Millett-Ho polynomial
\cite{blm86},\cite{ho85}, and
$u(5_1\#10_{132})=u(5_1)+u(10_{132})=3$ \cite{li20}, found
using the 4-dimensional clasp number. Other similar results
can be found throughout the literature.
A more thorough study of the success
of additivity for unknotting number is certain to uncover more
such pairs, and could lead to more comprehensive results.


\section{Verification code}\label{sec:verify}


For the convenience of the reader, we include Python code, suitable for 
running in SnapPy, which carries out the identifications in each of 
the unknotting sequences introduced in Section \ref{sec:counter}.
The code can be adapted to run in SageMath (after loading SnapPy with the
\emph{import snappy} command) by prepending
\emph{snappy.} to each instance of the words
\emph{Link} and \emph{Manifold}.

\smallskip

For $4_1\#9_{10}$ :

\begin{verbatim}
DTC1A=[6,-10,24,20,-4,-22,-8,26,28,30,-12,-2,14,18,16]
K1A=Link('DT:'+str(DTC1A))
K1A.simplify('global')
CC1A=K1A.deconnect_sum()
M1A1=CC1A[0].exterior()
M1A2=CC1A[1].exterior()
print(str(M1A1.identify())+'   '+str(M1A2.identify()))

DTC1B=DTC1A[:]
DTC1B[3]=-DTC1B[3]
M1B=Manifold('DT:'+str(DTC1B))
print(str(M1B.identify()))

DTC1C=DTC1B[:]
DTC1C[11]=-DTC1C[11]
M1C=Manifold('DT:'+str(DTC1C))
print(str(M1C.identify()))


DTC1D=[-10,-6,-14,-12,-16,-8,-2,-4]
M1D=Manifold('DT:'+str(DTC1D))
print(str(M1C.is_isometric_to(M1D)))

DTC1E=DTC1D[:]
DTC1E[6]=-DTC1E[6]
K1E=Link('DT:'+str(DTC1E))
K1E.simplify('global')
print(str(K1E))
\end{verbatim}

\smallskip

For $5_1\#8_2$ : 

\smallskip

\begin{verbatim}
DTC2A=[6,12,24,-14,-16,-18,-2,-8,-10,26,28,30,4,20,22]
K2A=Link('DT:'+str(DTC2A))
K2A.simplify('global')
CC2A=K2A.deconnect_sum()
M2A1=CC2A[0].exterior()
M2A2=CC2A[1].exterior()
print(str(M2A1.identify())+'   '+str(M2A2.identify()))
print(str(CC2A[0]))
print(str(M2A1.fundamental_group()))
print(str(CC2A[1]))
print(str(M2A2.fundamental_group()))
## a knot with (up to) 5 crossings and the stated knot group is 5_1

DTC2B=DTC2A[:]
DTC2B[1]=-DTC2B[1]
M2B=Manifold('DT:'+str(DTC2B))
print(str(M2B.identify()))

DTC2C=[6,12,-16,24,26,28,20,22,-30,-4,14,2,8,10,-18]
M2C=Manifold('DT:'+str(DTC2C))
print(str(M2B.is_isometric_to(M2C)))

DTC2D=DTC2C[:]
DTC2D[0]=-DTC2D[0]
M2D=Manifold('DT:'+str(DTC2D))
print(str(M2D.identify()))


DTC2E=[14,8,18,12,-16,4,2,20,-10,6]
M2E=Manifold('DT:'+str(DTC2E))
print(str(M2D.is_isometric_to(M2E)))

DTC2F=DTC2E[:]
DTC2F[3]=-DTC2F[3]
K2F=Link('DT:'+str(DTC2F))
K2F.simplify('global')
print(str(K2F))
\end{verbatim}

\smallskip

For $3_1\#10_6$ : 

\smallskip

\begin{verbatim}
DTC3A=[4,14,20,-24,-26,-28,16,2,-22,-12,-30,-18,-8,-10,-6]
K3A=Link('DT:'+str(DTC3A))
K3A.simplify('global')
CC3A=K3A.deconnect_sum()
M3A1=CC3A[0].exterior()
M3A2=CC3A[1].exterior()
print(str(M3A1.identify())+'   '+str(M3A2.identify()))

DTC3B=DTC3A[:]
DTC3B[9]=-DTC3B[9]
M3B=Manifold('DT:'+str(DTC3B))
print(str(M3B.identify()))

DTC3C=[4,14,20,22,24,16,-26,2,-28,10,6,8,-30,-12,-18]
M3C=Manifold('DT:'+str(DTC3C))
print(str(M3B.is_isometric_to(M3C)))

DTC3D=DTC3C[:]
DTC3D[5]=-DTC3D[5]
M3D=Manifold('DT:'+str(DTC3D))
print(str(M3D.identify()))


DTC3E=[-16,-14,-24,-18,-22,-20,-2,-4,-6,-8,-10,-12]
M3E=Manifold('DT:'+str(DTC3E))
print(str(M3D.is_isometric_to(M3E)))

DTC3F=DTC3E[:]
DTC3F[2]=-DTC3F[2]
K3F=Link('DT:'+str(DTC3F))
K3F.simplify('global')
print(str(K3F))
\end{verbatim}


\section{Acknowledgements}\label{sec:ackn}


The first author acknowledges support by a grant from
the Simons Foundation (Collaboration Grant number 525802).
The second author acknowledges support by a grant from
the Simons Foundation (Collaboration Grant number 581433).
The authors also acknowledge the support of
the Holland Computing Center at the University of 
Nebraska, which provided computing facilities on 
which the some of the work toward this project was carried out.




\end{document}